\newtheorem{thm}{Theorem}[section]
 \newtheorem{conj}[thm]{Conjecture}
\newtheorem{lem}[thm]{Lemma}
\newtheorem{cor}[thm]{Corollary}
\theoremstyle{definition}
\newtheorem{defn}[thm]{Definition}
\newtheorem{rmk}[thm]{Remark}
\newtheorem*{ack}{Acknowledgments}  
\theoremstyle{remark}
\numberwithin{equation}{section}
\newcommand{\mi}{\mathcal{I}}
\newcommand{\mo}{\mathcal{O}}
\newcommand{\C}{\mathbb{C}}
\newcommand{\Q}{\mathbb{Q}}
\newcommand{\exc}{\mathrm{Exc}}
\newcommand{\nklt}{\mathrm{Nklt}}
\newcommand{\nqklt}{\mathrm{Nqklt}}
\newcommand{\restr}[1]{{\raisebox{-0.0\height}{$\mid_{#1}$}}}
\begin{document}
\title[Simple connectedness of 
slc Fano log pairs]{Simple Connectedness of 
Fano log pairs with semi-log canonical singularities}
\author{Osamu Fujino}
\address{Osamu Fujino\\Department 
of Mathematics\\Graduate School of 
Science\\Osaka University\\Toyonaka\\Osaka 560-0043\\Japan}
\email{fujino@math.sci.osaka-u.ac.jp}
\author{Wenfei Liu}
\address{Wenfei Liu \\School of Mathematical 
Sciences\\ Xiamen University\\Siming 
South Road 422\\361005 Xiamen\\ Fujian\\ P. R. China}
\email{wliu@xmu.edu.cn}
\thanks{}
\subjclass[2010]{Primary 14J45; Secondary 14E30}
\date{2017/12/9}
\keywords{simple connectedness, 
rational chain connectedness, 
Fano varieties, semi-log canonical 
singularities}

\begin{abstract}
We show that any union of slc 
strata of a Fano log pair with semi-log canonical 
singularities is simply connected. 
In particular, Fano log pairs with semi-log canonical singularities are 
simply connected, which confirms a conjecture of the first author. 
\end{abstract}
\maketitle

\section*{Introduction}
Fano manifolds are complex projective manifolds 
whose canonical class is anti-ample. It is known 
that every Fano manifold is simply connected. Indeed, 
there are at least three independent 
proofs of this fact, we refer to \cite{Tak00} 
and \cite[Section 6]{Fuj14b} for more details. In 
birational geometry, the notion of Fano manifolds 
is generalized to that of Fano log pairs which 
allows singularities coming up naturally in the 
minimal model program (MMP). The rational 
chain connectedness of Fano log pairs with 
more and more general (up to log canonical) singularities was then 
established in \cite{Cam92, KMM92, Zha06, HM07}, 
and this implies that the (topological) fundamental 
group of the variety is finite. On the other hand, one shows that 
the \emph{algebraic} fundamental group of a 
Fano log pair is trivial by vanishing theorems. 
Combining these two facts, the simple 
connectedness of Fano log pairs with log 
canonical singularities follows (\cite[Theorem~6.1]{Fuj17b}). 

Semi-log canonical singularities incorporate the 
non-normal counterpart of log-canonical 
singularities. They appear on the varieties at 
the boundaries of the compactifications of 
moduli spaces (see \cite[Part III]{HK10}, 
\cite{Kol13a, Kol13b}). Even more general 
singularities, the quasi-log canonical singularities, 
are introduced in the inductive treatment of the 
MMP (\cite{Amb03}). This class of singularities allow 
to put log pairs with semi-log canonical singularities 
and their slc strata, or any union thereof, on the equal 
footing. The fundamental theorems 
in the MMP, especially the vanishing theorems, 
are now available in this context (\cite{Fuj17a}). 
As a consequence, quasi-log canonical varieties 
with anti-ample quasi-log canonical class has 
trivial algebraic fundamental group (\cite[Corollary 1.2]{Fuj17b}). This 
leads to the following

\begin{conj}[\cite{Fuj17b}, Conjecture 1.3]\label{conj}
Let $[X, \omega]$ be a projective quasi-log canonical 
pair such that $-\omega$ is ample. Then $X$ is simply connected.
\end{conj}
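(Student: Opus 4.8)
The plan is to establish separately the two halves of simple connectedness --- triviality of the profinite completion of $\pi_1(X)$ and finiteness of $\pi_1(X)$ --- and then combine them. First I would invoke the comparison between the algebraic and topological fundamental groups: by the Riemann existence theorem, for the complex variety $X$ the algebraic fundamental group is the profinite completion of $\pi_1(X)$. Since $[X,\omega]$ is projective quasi-log canonical with $-\omega$ ample, \cite[Corollary~1.2]{Fuj17b} gives that the algebraic fundamental group is trivial, so $\pi_1(X)$ has no nontrivial finite quotient. It therefore suffices to prove that $\pi_1(X)$ is \emph{finite}, since a finite group with trivial profinite completion is itself trivial.

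To obtain finiteness I would first show that $X$ is rationally chain connected. The ampleness of $-\omega$ feeds into the cone and contraction theorem for quasi-log canonical pairs from \cite{Fuj17a}: $\omega$-negativity produces an abundant supply of rational curves, and a Kawamata--Miyaoka--Mori/Hacon--McKernan-type argument (as in \cite{KMM92, HM07}) adapted to the qlc setting joins general points of each component by chains of rational curves. For a \emph{normal} proper rationally chain connected variety the finiteness of $\pi_1$ is classical; this is exactly the mechanism used in the log canonical case in \cite[Theorem~6.1]{Fuj17b}.

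The genuinely new difficulty is that a quasi-log canonical pair need not be normal, and rational chain connectedness by itself does \emph{not} force finiteness of $\pi_1$ for non-normal varieties --- a cycle of rational curves is rationally chain connected yet has $\pi_1\cong\Z$. The remedy is to control the gluing combinatorics by the same positivity, via induction on dimension, which is precisely why one proves the statement for arbitrary unions of slc strata rather than for $X$ alone. The non-qklt locus $\nqklt(X,\omega)$ carries an induced quasi-log canonical structure with anti-ample quasi-log canonical class (adjunction, \cite{Fuj17a}) and is lower-dimensional, hence simply connected by the inductive hypothesis. Passing to the normalization, each normalized component has finite $\pi_1$ by the normal case and the strata along which the components are glued are simply connected by induction, so a van Kampen analysis reduces finiteness to showing that the gluing creates no new loop --- equivalently, that the dual complex of the slc/qlc strata is simply connected. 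I expect this last point, ruling out the nodal-curve phenomenon in all dimensions by exploiting the anti-ampleness of $\omega$, to be the main obstacle and the place where the full qlc machinery (adjunction for qlc centers, the structure of $\nqklt$, and the vanishing theorems underlying triviality of the algebraic $\pi_1$) is indispensable. Once finiteness is secured by this combinatorial--topological induction, combining it with the first step yields $\pi_1(X)=1$.
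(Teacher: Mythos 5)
There is a genuine gap here --- in fact two, and they sit exactly where the real mathematical content lies. First, note that the statement you were asked to prove appears in the paper only as a \emph{conjecture}: the paper itself proves just the special case where the qlc structure comes from a log pair $(X,\Delta)$ with semi-log canonical singularities (Theorem \ref{thm2.7}), and everything it does rests on the subadjunction formula of Lemma \ref{lem: subadj}. That lemma passes to the normalization $\bar X$ (an honest lc pair), takes a dlt blow-up, and applies Ambro's theory of lc-trivial fibrations (\cite{Amb04, Amb05}) to produce an effective $\Q$-divisor $B_{\bar W}$ on the normalization $\bar W$ of each stratum with $K_{\bar W}+B_{\bar W}\sim_{\Q}(K_X+\Delta)|_{\bar W}$ anti-ample and $\nklt(\bar W,B_{\bar W})\subset \nu^{-1}(E)$. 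Your proposed ``cone-theorem/KMM--HM argument adapted to the qlc setting'' has no known substitute for this: a general qlc pair provides no divisorial structure on its normalization to which \cite{KMM92}, \cite{HM07} or \cite{BP11} could be applied, and even the paper's rational chain connectedness statement for slc pairs (Corollary \ref{cor: rcc}) is deduced from Lemma \ref{lem: subadj} rather than proved directly. This missing structure is precisely why Conjecture \ref{conj} remains open, and your sketch does not supply it.

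Second, and independently of the qlc-versus-slc issue, your van Kampen reduction is logically flawed. If the normalized components have finite but nontrivial fundamental groups and are glued along simply connected strata, van Kampen produces a \emph{free} product, and a free product of two nontrivial finite groups (e.g.\ $\Z/2 * \Z/2$) is infinite; so ``finiteness of the components plus simple connectedness of the dual complex'' does not yield finiteness of $\pi_1(X)$. The obstruction is not that the gluing creates new loops, but that the loops already present in each component must be killed by the gluing locus. The paper achieves this with \cite[Corollary 1.4]{HM07}: the homomorphism $\pi_1(\nklt(\bar Z_j, B_{\bar Z_j}))\to\pi_1(\bar Z_j)$ is \emph{surjective}, and since $\nklt(\bar Z_j,B_{\bar Z_j})\subset\nu^{-1}(E_j)$, the amalgamated-product description of \cite[Corollary 3.2]{FPR15} shows that the inductively trivial $\pi_1(E)$ surjects onto $\pi_1(Z_j\cup E)$. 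Thus triviality (not merely finiteness) propagates directly up the induction, starting from the minimal stratum, which is klt Fano by Corollary \ref{cor: min} and hence simply connected by \cite{Tak00}; the paper never needs your first reduction via the profinite completion. In short, the two ingredients you flag as ``expected'' --- qlc rational chain connectedness and the no-new-loops claim --- are respectively unavailable and insufficient, and the idea missing from both is the subadjunction-plus-\cite{HM07} surjectivity mechanism, which is exactly what confines the paper's theorem to the slc case.
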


In this paper we confirm the conjecture 
for an important special case. It is an answer to \cite[Problem 3.6]{Fuj14b}. 

\begin{thm}[see Theorem \ref{thm2.7}]\label{thm: main}
Any union of slc strata of a Fano log 
pair $(X, \Delta)$ with semi-log canonical singularities is simply connected. 
\end{thm}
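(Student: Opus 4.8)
The plan is to establish simple connectedness by controlling the algebraic and the topological fundamental groups separately and then comparing them. Recall that for a connected complex projective variety the algebraic (\'etale) fundamental group $\pi_1^{\mathrm{alg}}$ is the profinite completion of the topological fundamental group $\pi_1$. Hence if one can show that $\pi_1(W)$ is \emph{finite} while $\pi_1^{\mathrm{alg}}(W)$ is \emph{trivial}, then $\pi_1(W)$, being a finite group equal to its own profinite completion, must itself be trivial; together with connectedness this yields simple connectedness of $W$.

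The triviality of the algebraic fundamental group should come for free from the quasi-log canonical machinery. For any union $W$ of slc strata of the Fano slc pair $(X,\Delta)$, the restriction $\omega:=(K_X+\Delta)|_W$ endows $W$ with a natural projective quasi-log canonical structure whose qlc strata are exactly the slc strata contained in $W$; this is the sense in which slc pairs, their strata, and arbitrary unions thereof sit on equal footing. Since $(X,\Delta)$ is Fano, $-\omega$ is ample on $W$, so $[W,\omega]$ is a projective qlc pair with anti-ample qlc class. The vanishing theorems of \cite{Fuj17a} then give $H^0(W,\mathcal{O}_W)=\mathbb{C}$, so $W$ is connected, and by \cite[Corollary~1.2]{Fuj17b} the algebraic fundamental group $\pi_1^{\mathrm{alg}}(W)$ is trivial.

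It remains to prove that $\pi_1(W)$ is finite, for which the plan is to show that $W$ is rationally chain connected and to invoke the standard fact, already used in the log canonical case, that a rationally chain connected projective variety has finite topological fundamental group. Rational chain connectedness will be proved by induction on $\dim W$ using the normalization $\nu\colon W^\nu\to W$. Each connected component of $W^\nu$ is a normal lc pair which, being the pullback of an ample class under a finite morphism, is again Fano; by \cite{Cam92,KMM92,Zha06,HM07} it is therefore rationally chain connected. The conductor on each component is a union of lower-dimensional qlc (equivalently slc) strata, to which the inductive hypothesis applies. Since $W$ is connected, the components of $W^\nu$ are glued into $W$ through these conductor loci, and any two general points of $W$ can be joined by concatenating rational chains inside the components with chains passing through the shared gluing strata, giving rational chain connectedness of $W$.

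The main obstacle is the non-normality of $W$ and the bookkeeping of the gluing that underlies the induction. One must verify carefully that the qlc structure on $W$ restricts correctly to the conductor and to the individual strata, so that the induction is set up on the correct class of objects, and that rational chains on the normalization can be matched across the gluing locus to produce genuine rational chains on $W$ itself rather than merely on $W^\nu$. This is exactly where the adjunction and the structure theory of qlc strata in \cite{Fuj17a} are needed, since they govern the inductive passage between a union of strata, its normalization, and the conductor.
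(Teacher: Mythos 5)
There is a genuine gap, and it is precisely the one the paper warns about. Your argument hinges on the ``standard fact'' that a rationally chain connected projective variety has finite topological fundamental group, so that triviality of the profinite completion $\pi_1^{\mathrm{alg}}(W)$ forces $\pi_1(W)=1$. That fact is only valid for \emph{normal} varieties, and a union $W$ of slc strata is in general non-normal (distinct strata cross each other, and $X$ itself may be non-normal). The implication fails badly in the non-normal setting: a rational curve with a node is rationally chain connected, yet it is homotopy equivalent to $S^2\vee S^1$ and has $\pi_1\cong\mathbb Z$ (see \cite[Remark 6.2]{Fuj17b}, cited in the introduction of this paper exactly to flag this failure). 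So even after you prove rational chain connectedness of $W$ (which is true, and is Corollary \ref{cor: rcc} here), you cannot conclude finiteness of $\pi_1(W)$, and the profinite-completion comparison never gets off the ground. A secondary gap: you assert that the components of the normalization of $W$ are ``normal lc pairs'' that are ``again Fano.'' For strata that are not irreducible components of $X$ there is no a priori boundary divisor on the normalization realizing $(K_X+\Delta)|_{\bar W}$ as an anti-ample log canonical class; producing one is the nontrivial subadjunction step (Lemma \ref{lem: subadj}, via lc-trivial fibrations), not a formal pullback.

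The paper's proof is structured to avoid the finiteness argument altogether. It uses the subadjunction formula to equip the normalization $\bar Z_j$ of each stratum with a boundary $B_{\bar Z_j}$ whose non-klt locus sits inside the preimage of the smaller strata; the minimal stratum then becomes a klt Fano pair, hence simply connected by \cite[Theorem 1.1]{Tak00}. Triviality of $\pi_1$ is then propagated upward through the filtration $W=W^{(0)}\supset\cdots\supset W^{(k)}=W_0$ by a purely topological induction: Hacon--McKernan's \cite[Corollary 1.4]{HM07} gives surjectivity of $\pi_1(\nklt(\bar Z_j,B_{\bar Z_j}))\rightarrow\pi_1(\bar Z_j)$, a double-mapping-cylinder argument from \cite{FPR15} together with the van Kampen theorem converts this into surjectivity of $\pi_1(E)\rightarrow\pi_1(Z_j\cup E)$ across the non-normal gluing, and connectedness of the relevant loci (Lemma \ref{lem: conn}, from vanishing theorems) keeps van Kampen applicable. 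If you want to salvage your outline, you must replace the step ``rcc $\Rightarrow$ finite $\pi_1$'' by an argument of this kind that transports simple connectedness from the normalizations and the smaller strata to $W$ itself.
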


\noindent As a corollary, we solve \cite[Conjecture 1.4]{Fuj17b}:

\begin{cor}\label{cor0.3}
Fano log pairs with semi-log canonical singularities are simply connected.
\end{cor}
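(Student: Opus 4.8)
The plan is to run, for an arbitrary union $W$ of slc strata, the same two-step strategy that settles the normal log canonical case. The two ingredients are: first, that $W$ is rationally chain connected, which forces $\pi_1(W)$ to be finite; and second, that the algebraic (pro-finite) fundamental group of an anti-ample quasi-log canonical pair is trivial, which is already available from \cite[Corollary 1.2]{Fuj17b}. Since the pro-finite completion of a finite group is the group itself, a space that is simultaneously rationally chain connected and has trivial algebraic fundamental group must be simply connected. Corollary \ref{cor0.3} is then the special case $W=X$, so the entire problem reduces to establishing rational chain connectedness of $W$.

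The first step is to equip $W$ with its natural quasi-log canonical structure: writing $\omega$ for the restriction of $K_X+\Delta$ to $W$, the ampleness of $-(K_X+\Delta)$ makes $[W,\omega]$ a projective qlc pair with $-\omega$ ample, whose qlc strata are exactly the slc strata of $(X,\Delta)$ contained in $W$. This uniform framework puts $W$, its irreducible components, and all their intersections on an equal footing and opens the way to an induction on $\dim W$. Before the main induction I would record that $W$ is connected, propagating through its lower-dimensional strata; this follows from the vanishing theorems and the associated ideal-sheaf exact sequences in the qlc category (\cite{Fuj17a}), and it is needed both as a prerequisite for simple connectedness and as the scaffolding along which rational chains will be propagated.

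The heart of the argument is the rational chain connectedness of $W$, which I would prove by induction on $\dim W$. For a top-dimensional irreducible component $W_i$ I would pass to its normalization $\bar W_i$, on which the conductor together with the pulled-back boundary forms a log canonical pair whose log canonical class is the pullback of $\omega$; because the normalization morphism is finite and $-\omega$ is ample, this pulled-back class is anti-ample, so $\bar W_i$ is a normal log canonical Fano pair and the rational chain connectedness theorems in the normal setting (\cite{Zha06, HM07}) apply, whence $W_i$ is rationally chain connected. To link distinct components and to cover the non-normal locus, I would invoke qlc adjunction from \cite{Fuj17a}: each proper qlc stratum of $[W,\omega]$ is itself an anti-ample qlc pair of strictly smaller dimension, hence rationally chain connected by the inductive hypothesis. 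Combined with the connectedness of the stratification, which forces any two components to be joined through such strata, this allows chains of rational curves to be carried from one component into the next.

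The main obstacle I anticipate is precisely this gluing synthesis: verifying that adjunction genuinely produces an \emph{anti-ample} qlc class on each stratum, so that the inductive hypothesis is applicable and the induction closes, and making precise that a chain of rational curves reaching the intersection locus of two components can be continued into the adjacent component rather than merely abutting it. Once rational chain connectedness of $W$ is secured, the finiteness of $\pi_1(W)$ follows, and combining this with the triviality of the algebraic fundamental group yields $\pi_1(W)=1$; this proves the theorem and, taking $W=X$, the corollary.
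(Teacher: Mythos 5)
Your reduction has a fatal gap at its very first step: the implication ``$W$ rationally chain connected $\Rightarrow \pi_1(W)$ finite'' is false for non-normal varieties, and $X$ (like any union of slc strata) is in general non-normal in the slc setting. The standard counterexample is a rational curve with one node: it is rationally chain connected, yet topologically it is a sphere with two points identified, so $\pi_1\cong\mathbb Z$. This is exactly the obstruction the paper flags in its introduction (citing \cite[Remark 6.2]{Fuj17b}) as the reason the two-step argument that settles the normal lc case --- finiteness of $\pi_1$ from rational chain connectedness, plus triviality of the algebraic fundamental group from \cite[Corollary 1.2]{Fuj17b} --- does not extend to semi-log canonical pairs. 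Since your whole proof funnels through this implication, establishing rational chain connectedness of $W$ (which the paper does prove, as Corollary \ref{cor: rcc}, by essentially the induction you sketch) does not close the argument: nothing rules out infinite $\pi_1(W)$ coming from loops created by the gluing along the non-normal locus, and the profinite-completion step then has nothing to act on.

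What the paper does instead is control precisely those loops. It filters $W=W^{(0)}\supset W^{(1)}\supset\cdots\supset W^{(k)}=W_0$ by unions of successively smaller slc strata and proves $\pi_1(W^{(i)})=1$ by descending induction. The base case is the minimal stratum $W_0$, which by the subadjunction formula (Lemma \ref{lem: subadj}) carries the structure of a \emph{normal} klt Fano pair, hence is simply connected by \cite{Tak00}. For the inductive step, with $Z=W^{(i-1)}$ and $E=W^{(i)}$, it applies the van Kampen theorem to a covering of $Z$ adapted to the components $Z_j$, uses the double-mapping-cylinder homotopy equivalence of \cite{FPR15} to obtain $\pi_1(Z_j\cup E)\cong\pi_1(E)*_{\pi_1(\nu^{-1}(E_j))}\pi_1(\bar Z_j)$, and then invokes \cite[Corollary 1.4]{HM07} --- surjectivity of $\pi_1(\nklt(\bar Z_j,B_{\bar Z_j}))\to\pi_1(\bar Z_j)$ on the normalization, where $\nklt(\bar Z_j,B_{\bar Z_j})\subset\nu^{-1}(E_j)$ by Lemma \ref{lem: subadj} --- to conclude that $\pi_1(E)\to\pi_1(Z_j\cup E)$ is surjective, hence that $\pi_1(Z_j\cup E)$ is trivial by the induction hypothesis. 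This surjectivity statement, not rational chain connectedness, is the tool that kills the loops arising from non-normality; it is the ingredient your proposal is missing, and no amount of tightening your gluing synthesis for rational chains will substitute for it.
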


A key ingredient of the proof is a 
subadjunction formula for slc 
strata (Lemma~\ref{lem: subadj}). 
In particular, this makes the minimal slc 
stratum into a Fano log pair with 
Kawamata log terminal singularities, so it is simply connected. 

What also follows is that any union of 
slc strata of a Fano log pair with semi-log canonical 
singularities are rationally chain 
connected (Corollary~\ref{cor: rcc}). However, 
for non-normal varieties rational chain 
connectedness does not imply the finiteness 
of the fundamental group as in the 
normal case (consider for example 
a rational curve with nodes \cite[Remark 6.2]{Fuj17b}).

Thus we need to invoke \cite[Corollary 1.4]{HM07} and 
the van Kampen theorem to show that the natural homomorphism 
of fundamental groups induced by the 
inclusion of the minimal slc stratum 
into a union of slc strata is surjective. 
In this way the required simple connectedness is proved.

\medskip

\noindent{\bf Conventions:} We work 
over $\mathbb C$, the complex number field, throughout 
this paper. A \emph{scheme} means a separated scheme of finite type over 
$\mathbb C$. 
We freely use the standard notation of the MMP 
as in \cite{Fuj17a}. If $f\colon X\rightarrow Y$ is 
a continuous map between two path-connected topological spaces, we omit 
the base points for the fundamental groups in the induced 
homomorphism $\pi_1(X)\rightarrow\pi_1(Y)$, which 
will be harmless for the arguments in this paper. 
When we treat a Fano log pair $(X, \Delta)$ with 
semi-log canonical singularities, we always assume that 
$X$ is connected. 

\begin{ack}
The first author was partially 
supported by JSPS KAKENHI Grant 
Numbers JP16H03925, JP16H06337. The 
second author was partially supported by 
the NSFC (No.~11501012, No.~11771294) 
and by the Recruitment Program for Young Professionals.
The authors would like to thank Ms.~Kimiko Tanaka for 
her supports. 
\end{ack}

\section{Preliminary}
A log pair $(X,\Delta)$ consists of an 
equi-dimensional demi-normal 
scheme 
$X$ together with an effective $\mathbb R$-divisor 
$\Delta$ on $X$, such that $\Delta$ does 
not contain any irreducible components 
of the non-normal locus of $X$ and $K_X+\Delta$ is $\mathbb R$-Cartier. 
We note that 
a scheme $X$ is demi-normal if 
it satisfies Serre's $S_2$ condition and if its codimension one points
are either regular points or nodes (\cite[Denition 5.1]{Kol13b}).

\begin{defn}\label{def1.1}
A projective log pair $(X, \Delta)$ is 
Fano if $-(K_X+\Delta)$ is ample, or 
put another way, if $K_X+\Delta$ is anti-ample.
\end{defn}

Let $(X, \Delta)$ be a normal log pair. 
Let $f\colon Y\rightarrow X$ be a resolution 
such that $\exc(f)\cup f^{-1}_*\Delta$ has a 
simple normal crossing support, where $\exc(f)$ is 
the exceptional locus of $f$ and $f^{-1}_*\Delta$ is 
the strict transform of $\Delta$ on Y. We can write
\[
K_Y = f^*(K_X+\Delta) +\sum_i a_iE_i.
\]
We usually write $a_i=a(E_i, X,\Delta)$ and 
call it the \emph{discrepancy} of $E_i$ with 
respect to $(X,\Delta)$. We say that $(X, \Delta)$ 
is \emph{log canonical} (resp.~\emph{Kawamata 
log terminal}) if $a_i\geq -1$ (resp.~$a_i>-1$) for every $i$. 
We use abbreviations \emph{lc} and $\emph{klt}$ for 
log canonical and Kawamata log terminal respectively.

If $(X,\Delta)$ is a normal log pair 
(resp.~an lc pair)
and if there exist a resolution $f\colon Y\rightarrow X$ 
and a prime divisor $E$ on $Y$ such 
that $a(E, X,\Delta)\leq -1$ (resp.~$a(E, X,\Delta)= -1$) 
then $f(E)$ is called a \emph{$\nklt$ center} 
(resp.~\emph{an lc center}) of $(X,\Delta)$. 
The \emph{$\nklt$ locus} of $(X,\Delta)$, 
denoted by $\nklt(X, \Delta)$, is the union of 
all $\nklt$ centers. An \emph{lc stratum} of an lc pair
$(X,\Delta)$ means either an lc center or an irreducible component of $X$. 

\begin{defn}\label{def1.2}
A log pair $(X,\Delta)$ is said to have \emph{semi-log 
canonical} (\emph{slc}) singularities if  $(\bar X, \Delta_{\bar X})$ has
 log canonical singularities, where 
 $\nu\colon\bar X\rightarrow X$ is the 
 normalization and 
 $K_{\bar X}+\Delta_{\bar X}=\nu^*(K_X+\Delta)$. 
 An \emph{slc center} of $(X,\Delta)$ is the 
 image of an lc center of $(\bar X, \Delta_{\bar X})$. 
 An \emph{slc stratum} of $(X, \Delta)$ means 
 either an slc center of $(X,\Delta)$ or an irreducible 
 component of $X$. \end{defn}

Note that an slc stratum is irreducible by definition.

\medskip  

Let $D=\sum _i d_i D_i$ be an $\mathbb R$-divisor, where 
$D_i$ is a prime divisor and $d_i\in \mathbb R$ for every $i$ 
such that $D_i\ne D_j$ for $i\ne j$. 
We put 
$$
D^{<1} =\sum _{d_i<1}d_iD_i, \quad 
D^{\leq 1}=\sum _{d_i\leq 1} d_i D_i, \quad 
D^{= 1}=\sum _{d_i= 1} D_i, \quad 
\text{and} \quad \lceil D\rceil =\sum _i \lceil d_i \rceil D_i, 
$$ 
where $\lceil d_i\rceil$ is the integer defined by $d_i\leq 
\lceil d_i\rceil <d_i+1$. 

\medskip

Let $Z$ be a simple normal crossing 
divisor on a smooth variety $M$ and $B$ an $\mathbb R$-divisor 
on $M$ such that 
$Z$ and $B$ have no common irreducible components and 
that the support of $Z+B$ is a simple normal crossing divisor on $M$. In this 
situation, $(Z, B|_Z)$ is called a {\em{globally embedded simple 
normal crossing pair}}.

\medskip

Let us quickly look at the definition of 
qlc pairs for the reader's convenience.

\begin{defn}[Qlc pairs]\label{def1.3}
Let $X$ be a scheme and $\omega$ an 
$\mathbb R$-Cartier divisor (or an $\mathbb R$-line bundle) on $X$. 
Let $f:Z\to X$ be a proper morphism from a globally embedded 
simple normal 
crossing pair $(Z, \Delta_Z)$. If the natural map 
$\mathcal O_X\to f_*\mathcal O_Z(\lceil -(\Delta_Z^{<1})\rceil)$ is an 
isomorphism and $f^*\omega\sim _{\mathbb R} K_Z+\Delta_Z$, 
then $[X, \omega]$ is called a {\em{quasi-log canonical pair}} 
({\em{qlc pair}}, for short). 
\end{defn}

\begin{rmk}\label{rem1.4}
We can define {\em{qlc centers}}, 
{\em{qlc strata}}, $\nqklt(X, \omega)$, and so on,  
for a qlc pair $[X, \omega]$, 
which are 
counterparts of 
(s)lc centers, (s)lc strata, and 
$\nklt(X, \Delta)$, respectively. 
In the situation of Definition \ref{def1.3}, 
$C$ is a qlc stratum of $[X, \omega]$ if and only if $C$ is the 
$f$-image of some slc stratum of $(Z, \Delta^{=1}_Z)$. 
The subvariety $C$ is a qlc center of $[X, \omega]$ if and only if 
$C$ is a 
qlc stratum of $[X, \omega]$ but is not an irreducible component of 
$X$. 
The union of all qlc centers of $[X, \omega]$ is denoted by 
$\nqklt(X, \omega)$. 
For the details, see \cite[Definitions 
6.2.2, 6.2.8, 6.2.9, and 
Notation 6.3.10]{Fuj17a}. 
\end{rmk}

We refer to \cite[Chapter 6]{Fuj17a} for 
the theory of quasi-log schemes. 

\begin{thm}[{see \cite[Theorem 1.2]{Fuj14a}}]\label{thm1.5}
Let $(X, \Delta)$ be a quasi-projective 
log pair with 
semi-log canonical singularities. 
Then $[X, \omega]$, where $\omega=K_X+\Delta$, 
has a qlc structure which is compatible with 
the original slc structure of $(X, \Delta)$. 
This means that 
$C$ is an slc center {\em{(}}resp.~slc stratum{\em{)}} 
of $(X, \Delta)$ if 
and only if $C$ is a qlc center {\em{(}}resp.~qlc stratum{\em{)}} 
of $[X, \omega]$. 
In particular, any union of slc strata of $(X, \Delta)$ is qlc 
by adjuction. 
\end{thm}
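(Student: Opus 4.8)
The plan is to exhibit directly the globally embedded simple normal crossing pair and structure morphism required by Definition \ref{def1.3}, and then to read off the correspondence of strata from Remark \ref{rem1.4}. First I would pass to the normalization $\nu\colon\bar X\to X$, which by Definition \ref{def1.2} presents $(\bar X,\Delta_{\bar X})$ as a normal log canonical pair with $K_{\bar X}+\Delta_{\bar X}=\nu^*(K_X+\Delta)$. Since $X$ is demi-normal, the conductor divisor $\bar D\subset\bar X$ (the reduced preimage of the non-normal locus) occurs in $\Delta_{\bar X}$ with coefficient one, and $X$ is recovered from $\bar X$ by gluing the two branches of $\bar D$ along the involution $\tau$ determined by the nodal structure in codimension one. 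By Definition \ref{def1.2} the slc centers of $(X,\Delta)$ are exactly the $\nu$-images of the lc centers of $(\bar X,\Delta_{\bar X})$, so $\bar D$ and its lc strata are precisely what accounts for the passage from ``lc on $\bar X$'' to ``slc on $X$''.

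Next I would take a log resolution $p\colon Y\to\bar X$ of $(\bar X,\Delta_{\bar X})$ so that $\exc(p)\cup p^{-1}_*\Delta_{\bar X}$ is a simple normal crossing divisor, and define $B_Y$ by $K_Y+B_Y=p^*(K_{\bar X}+\Delta_{\bar X})$; log canonicity forces all coefficients of $B_Y$ to be $\le 1$, with $B_Y^{=1}\supseteq p^{-1}_*\bar D$. The globally embedded snc pair $(Z,\Delta_Z)$ is then built from this resolution: $Z$ is a (genuinely reducible) simple normal crossing variety obtained from $Y$ by re-gluing the strict transform of $\bar D$ according to $\tau$ — after possibly further blow-ups to make $\tau$ compatible with the snc structure — realized as a divisor in a smooth ambient space, and $\Delta_Z$ is the boundary induced by $B_Y$, whose coefficient-one part records the lc/slc centers. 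The structure morphism $f\colon Z\to X$ is the composite of $p$, $\nu$, and the gluing. With $\Delta_Z$ defined this way, the relation $f^*\omega\sim_{\mathbb R}K_Z+\Delta_Z$ holds by the discrepancy formula, giving the second requirement of Definition \ref{def1.3}.

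The main obstacle is the first requirement of Definition \ref{def1.3}, the isomorphism $\mathcal O_X\cong f_*\mathcal O_Z(\lceil-(\Delta_Z^{<1})\rceil)$. Over $\bar X$ the analogous pushforward already computes $\mathcal O_{\bar X}$, because $\lceil-(B_Y^{<1})\rceil$ is effective and $p$-exceptional by log canonicity, so $p_*$ of it is $\mathcal O_{\bar X}$; this is where lc-ness of $(\bar X,\Delta_{\bar X})$ enters and is essentially formal. The substance is that the gluing along $\bar D$ via $\tau$ must be faithfully encoded in $Z$ so that sections descend from $\mathcal O_{\bar X}$ to the non-normal sheaf $\mathcal O_X$ rather than stopping at $\mathcal O_{\bar X}$. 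This is exactly where the demi-normal hypothesis is used: Serre's $S_2$ condition reduces the check to codimension one (and lets one ignore a codimension-two locus), while the nodal structure in codimension one realizes the conductor gluing as a local pushout that the snc model reproduces. I expect this sheaf-theoretic descent, rather than any discrepancy bookkeeping, to be the crux.

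Finally I would match the strata. By Remark \ref{rem1.4} the qlc strata of $[X,\omega]$ are the $f$-images of the slc strata of $(Z,\Delta_Z^{=1})$; by the construction of $Z$ these coincide with the $\nu$-images of the lc strata of $(\bar X,\Delta_{\bar X})$, which by Definition \ref{def1.2} are exactly the slc strata of $(X,\Delta)$, with the qlc centers (those that are not irreducible components) corresponding to the slc centers. This yields the asserted compatibility of the two structures. For the last assertion, any union $W$ of slc strata is then a union of qlc strata of $[X,\omega]$, so the adjunction for quasi-log schemes equips $[W,\omega|_W]$ with an induced qlc structure whose strata are precisely the qlc strata of $[X,\omega]$ contained in $W$; hence $W$ is qlc, as claimed.
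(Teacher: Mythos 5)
There is an important mismatch of genre here: the paper does not prove Theorem \ref{thm1.5} at all --- it quotes it from \cite[Theorem 1.2]{Fuj14a} and explicitly refers the reader there for details --- and the whole content of that cited theorem is precisely the construction that your proposal compresses into a single sentence. The genuine gap is the step where you build $Z$ from a log resolution $Y$ of $(\bar X, \Delta_{\bar X})$ by ``re-gluing the strict transform of $\bar D$ according to $\tau$ --- after possibly further blow-ups to make $\tau$ compatible with the snc structure.'' This is the known obstruction, not a technicality. The involution $\tau$ is defined only on the normalization of the conductor $\bar D$; it does not act on $\bar X$, hence not on $Y$ nor on $p^{-1}_*\bar D$. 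A log resolution therefore induces only a birational self-map of a model of the conductor, and no sequence of further blow-ups of $Y$ is known to make that self-map biregular, let alone compatible with the snc structure: there is no group action on the ambient space to make anything equivariant. Moreover, even if a biregular lift existed, the quotient of $Y$ by the induced finite equivalence relation --- which does exist as a scheme in the lc setting by Koll\'ar's gluing theory \cite{Kol13b} --- is a non-normal scheme with no reason to be realizable as a simple normal crossing divisor inside a smooth ambient variety, which is what Definition \ref{def1.3} demands of $(Z,\Delta_Z)$; gluing does not preserve the ``globally embedded snc'' property. This is exactly why the proof in \cite{Fuj14a} does not normalize, resolve, and re-glue: it constructs $f\colon Z\to X$ by a different route, resting on partial resolution theorems for reducible varieties (Bierstone--Milman, Bierstone--Vera Pacheco: resolution preserving semi-simple normal crossings) combined with Koll\'ar's theory of demi-normal schemes. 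The delicacy of passing between $X$ and $\bar X$ at the level of quasi-log structures is also why the present paper must invoke \cite{FL17} in Lemma \ref{lem: conn}; if structures glued and un-glued as easily as your sketch suggests, that result would be formal too.

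Two further points. First, your strata matching presupposes the missing construction: for an snc variety $Z$, the slc strata of $(Z,\Delta_Z^{=1})$ include every intersection of irreducible components of $Z$, and showing that their $f$-images are exactly the slc strata of $(X,\Delta)$ is part of what must be proved about $Z$, not a consequence of Remark \ref{rem1.4} and Definition \ref{def1.2} alone. Second, the ingredients you do supply are correct in spirit --- the crepant pullback bookkeeping, the computation $p_*\mathcal O_Y(\lceil -(B_Y^{<1})\rceil)\cong \mathcal O_{\bar X}$ from log canonicity, the observation that Serre's $S_2$ condition plus nodality in codimension one is what should let sections descend from $\mathcal O_{\bar X}$ to $\mathcal O_X$, and the formal deduction that unions of slc strata are qlc by adjunction once compatibility is known. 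So your proposal correctly isolates where the difficulty lies, but it assumes, rather than proves, the hardest step of \cite[Theorem 1.2]{Fuj14a}.
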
 
For the details of Theorem \ref{thm1.5}, 
see \cite{Fuj14a}. 
By this theorem, we can apply the theory of quasi-log schemes in 
\cite[Chapter 6]{Fuj17a} to log pairs with semi-log canonical singularities. 

\medskip

Let $(X,\Delta)$ be a quasi-projective 
log pair with semi-log canonical singularities. Then 
its slc strata have some nice properties (\cite[Theorem~6.3.11]{Fuj17a}):
\begin{enumerate}
\item[(a)] there is a unique minimal slc stratum through a given point;
\item[(b)] the minimal slc stratum 
at a given point is normal at that point;
\item[(c)] the intersection of two slc strata is a union of slc strata.
\end{enumerate} 
If $(X,\Delta)$ is additionally Fano, that is, $X$ is projective and 
$-(K_X+\Delta)$ is 
ample, then 
\begin{itemize}
\item[(d)] any union of slc strata of $(X,\Delta)$ is connected;
\item[(e)] there is a unique minimal slc 
stratum of $(X,\Delta)$,  which is normal. 
\end{itemize}
For (d) it suffices to show that 
$H^0(W, \mo_W)=\C$ for any union 
$W$ of slc strata, which follows from 
the vanishing $H^1(X, \mi_W)=0$ (\cite[Theorem~1.11]{Fuj14a}), 
and (e) is a direct consequence of (a), (b) and (d).

\section{Proof}
For the proof of Theorem \ref{thm: main}, we may assume that 
$\Delta$ is a $\mathbb Q$-divisor by perturbing 
$\Delta$. Therefore, for simplicity, we assume that every divisor is a 
$\mathbb Q$-divisor from now on. Let us start with 
the following easy lemma. 

\begin{lem}\label{lem2.1}
Let $f:X\to Y$ be a surjective morphism between connected
normal projective varieties. 
Let $\Delta$ be an effective 
$\mathbb Q$-divisor on $X$ such that 
$(X, \Delta)$ is lc and is klt over the generic point of $Y$. 
Assume that $K_X+\Delta\sim _{\mathbb Q} f^*D$ for 
some $\mathbb Q$-Cartier divisor $D$ on $Y$. 
Then we can construct an effective $\mathbb Q$-divisor 
$\Delta_Y$ on $Y$ such that $ K_Y+\Delta_Y \sim_\Q D$ and 
$\nklt(Y,\Delta_Y)\subset f(\nklt (X, \Delta))$.  
\end{lem}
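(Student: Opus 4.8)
The plan is to prove this via the canonical bundle formula for lc-trivial fibrations. Since $(X,\Delta)$ is lc, klt over the generic point of $Y$, and $K_X+\Delta\sim_\Q f^*D$, the morphism $f\colon(X,\Delta)\to Y$ is an lc-trivial fibration, so I would invoke the canonical bundle formula (in the form due to Ambro and Fujino--Gongyo) to produce a discriminant $\Q$-divisor $B_Y$ and a moduli $\Q$-divisor $M_Y$ on $Y$ with
$$
K_X+\Delta\sim_\Q f^*(K_Y+B_Y+M_Y).
$$
First I would record two properties. Because $f^*P$ is an integral effective divisor (so all its multiplicities are $\ge 1$) and $(X,\Delta)$ is lc, every coefficient of the discriminant lies in $[0,1]$, so $B_Y\ge 0$; and by the standard descent argument, $M_Y$ may be taken nef after passing to a suitable birational model of $Y$.

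Next I would descend the linear equivalence. Since $f$ is a surjective morphism between normal projective varieties, the pullback of $\Q$-Cartier divisor classes under $f$ is injective up to $\Q$-linear equivalence (via Stein factorization), so $f^*D\sim_\Q f^*(K_Y+B_Y+M_Y)$ forces $D\sim_\Q K_Y+B_Y+M_Y$. Thus it suffices to realize $B_Y+M_Y$ by an effective $\Q$-divisor $\Delta_Y$ whose non-klt locus is controlled, and then $K_Y+\Delta_Y\sim_\Q D$ holds automatically.

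Then I would analyze the good locus $U:=Y\setminus f(\nklt(X,\Delta))$. Over $U$ the pair $(X,\Delta)$ is klt along every fiber, and since klt is an open condition, $(X,\Delta+tf^*P)$ stays lc over the generic point of any prime divisor $P$ meeting $U$ for small $t>0$; by the definition of the discriminant this forces $\mathrm{coeff}_P B_Y<1$ for all such $P$, so $B_Y$ is klt over $U$. The remaining step, and the one I expect to be the main obstacle, is to choose an effective representative $M_Y^+\sim_\Q M_Y$ that does not destroy klt-ness over $U$: this is exactly where the positivity of the moduli part must be used. Using that $M_Y$ is nef (together with the available semiampleness and weak-positivity results for moduli parts in this setting), I would take $M_Y^+$ to be a general effective member of a suitable multiple of $M_Y$, so that $(Y,B_Y+M_Y^+)$ acquires no new non-klt centre over $U$ by a Bertini-type argument.

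Finally I would set $\Delta_Y:=B_Y+M_Y^+$. It is effective by construction and satisfies $K_Y+\Delta_Y\sim_\Q D$. For the inclusion $\nklt(Y,\Delta_Y)\subset f(\nklt(X,\Delta))$: if a point $y\in\nklt(Y,\Delta_Y)$ lay in $U$, then by the two previous paragraphs $(Y,\Delta_Y)$ would be klt at $y$, a contradiction; hence $y\in Y\setminus U=f(\nklt(X,\Delta))$, which is precisely the desired containment.
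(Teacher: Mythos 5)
Your overall strategy (canonical bundle formula for lc-trivial fibrations, then an effective representative of the moduli part, then control of the non-klt locus over $U:=Y\setminus f(\nklt(X,\Delta))$) is the same skeleton as the paper's proof, but there are two genuine gaps in your execution.

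First, and most seriously: an lc-trivial fibration requires $f$ to be a contraction, i.e.\ $f_*\mathcal O_X=\mathcal O_Y$. The lemma only assumes that $f$ is surjective, so $f$ may have disconnected fibers; it may even be a finite morphism of degree $\geq 2$, and in the paper's application (Lemma~\ref{lem: subadj}) the relevant morphism $S\to\bar W$ genuinely contains a nontrivial finite part coming from the normalization of the non-normal variety $X$. The canonical bundle formula of \cite{Amb04, Amb05} simply does not apply to such an $f$, and your parenthetical ``via Stein factorization'' is invoked only for the injectivity of pullback on $\mathbb Q$-linear equivalence classes, which does not repair this. The paper's proof first takes the Stein factorization $X\overset{g}{\longrightarrow}Z\overset{h}{\longrightarrow}Y$, applies lc-trivial fibration theory to the contraction $g$ to produce $\Delta_Z$ on $Z$ with $K_Z+\Delta_Z\sim_{\mathbb Q}h^*D$ and $\nklt(Z,\Delta_Z)\subset g(\nklt(X,\Delta))$, and then needs a different kind of argument---the proof of \cite[Lemma 1]{FG12}, a ramification/trace type argument for the \emph{finite} morphism $h$---to descend from $Z$ to $Y$. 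That second ingredient is entirely absent from your proposal, and it cannot be replaced by anything in it.

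Second, your treatment of the moduli part is not sound as written. Nefness of $M_Y$ (or of its incarnation on a birational model) does not produce \emph{any} effective $\mathbb Q$-divisor $M_Y^+\sim_{\mathbb Q}M_Y$, let alone a general one, and the ``semiampleness'' of moduli parts that you invoke is conjectural in this generality (the b-semiampleness conjecture). What the paper actually uses is Ambro's theorem \cite[Theorem 3.3]{Amb05}: on a suitable model $Z'$ the moduli part satisfies $M_{Z'}\sim_{\mathbb Q}p^*M_{Z''}$ for a proper surjective morphism $p:Z'\to Z''$ with $M_{Z''}$ nef \emph{and big}. It is the bigness that allows one to choose $G_{Z'}\sim_{\mathbb Q}M_{Z'}$ effective and sufficiently general so that no new non-klt points appear outside $\sigma^{-1}(g(\nklt(X,\Delta)))$. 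Remark~\ref{rem2.2} of the paper makes precisely this point: with nefness alone one can only prove the weaker statement in which $D$ is replaced by $D+A$ for an ample $\mathbb Q$-divisor $A$ (which, incidentally, would still suffice for the application to Theorem~\ref{thm: main} after perturbing $\Delta$, but is not the statement you set out to prove).
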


\begin{proof}
Let 
$$
f: X\overset{g}{\longrightarrow} Z \overset{h}{\longrightarrow} Y
$$ 
be the Stein factorization of $f:X\to Y$. 
By the theory of lc-trivial fibrations (see 
\cite[Theorem 0.2]{Amb04} and 
\cite[Theorem 3.3]{Amb05}), 
there exist a proper birational morphism 
$\sigma:Z'\to Z$ from a smooth 
projective 
variety $Z'$, a $\mathbb Q$-divisor $B_{Z'}$ on $Z'$, 
and a nef $\mathbb Q$-divisor 
$M_{Z'}$ on $Z'$ with the following properties:  
\begin{itemize}
\item[(i)] $\sigma^*h^*D\sim _{\mathbb Q} K_{Z'}+B_{Z'}+M_{Z'}$, 
\item[(ii)] the support of $B_{Z'}$ is a simple 
normal crossing divisor on $Z'$, $B_{Z'}=B^{\leq 1}_{Z'}$, 
and $B_{Z'}=B^{<1}_{Z'}$ outside 
$\sigma^{-1}(g(\nklt (X, \Delta)))$, and 
\item[(iii)] there exist a proper surjective 
morphism $p:Z'\to Z''$ onto a normal projective 
variety $Z''$ and a nef and big $\mathbb Q$-divisor 
$M_{Z''}$ on $Z''$ such that $M_{Z'}\sim_{\mathbb Q} p^*M_{Z''}$. 
\end{itemize}
Then we can take an effective $\mathbb Q$-divisor $G_{Z'}$ such that 
$G_{Z'}\sim _{\mathbb Q} M_{Z'}$ and 
that $K_Z+\Delta_Z$ is klt outside 
$g(\nklt (X, \Delta))$, where 
$\Delta_Z=\sigma_*(B_{Z'}+G_{Z'})$. 
We note that $K_Z+\Delta_Z\sim _{\mathbb Q} h^*D$ 
by construction. 
Then the proof of \cite[Lemma 1]{FG12}, applied to 
$h: Z\to Y$, gives an effective $\mathbb Q$-divisor $\Delta_Y$ on $Y$ 
such that $K_Y+\Delta_Y\sim _{\mathbb Q} D$ and 
that $\nklt(Y, \Delta_Y)\subset h(\nklt (Z, \Delta_Z))
\subset f(\nklt (X, \Delta))$.  
\end{proof}

\begin{rmk}\label{rem2.2}
In Lemma \ref{lem2.1}, 
it is easy to construct an effective $\mathbb Q$-divisor 
$\Delta_Y$ on $Y$ such that 
$K_Y+\Delta_Y\sim _{\mathbb Q} D+A$ with 
$\nklt (Y, \Delta_Y)\subset 
f(\nklt (X, \Delta_X))$, where $A$ is any ample $\mathbb Q$-divisor 
on $Y$ (see also \cite[Theorem 1.2]{Fuj99}). 
We can prove the above weaker statement 
without using Ambro's deep result (\cite[Theorem 3.3]{Amb05}). 
The nefness of $M_{Z'}$ (see \cite[Theorem 0.2]{Amb04}), which is 
much simpler than \cite[Theorem 3.3]{Amb05}, is sufficient. 
For the proof of Theorem \ref{thm: main}, 
we can replace $\Delta$ with 
$\Delta+\varepsilon H$, 
where $H$ is a general very ample effective 
divisor on $X$ and $\varepsilon$ is a sufficiently 
small positive 
rational number. 
Therefore, we can prove Theorem \ref{thm: main} 
without using \cite[Theorem 3.3]{Amb05}. 
\end{rmk}

As an application of Lemma \ref{lem2.1}, we have the following lemma. 

\begin{lem}\label{lem: subadj}
Let $W$ be an slc stratum of a projective log 
pair $(X,\Delta)$ with semi-log canonical singularities, and let $E$ be the 
union of all slc strata that are strictly contained in $W$.
Let $\nu\colon \bar W\rightarrow W$ 
be the normalization. Then there is an 
effective $\Q$-divisor $B_{\bar W}$ 
on $\bar W$ such that 
$K_{\bar W}+B_{\bar W}\sim_\Q (K_X+\Delta)
\restr{\bar W}$ and $\nklt(\bar W, B_{\bar W})\subset \nu^{-1}(E)$. 
Moreover, if $(X, \Delta)$ is additionally Fano, then 
$\nklt(\bar W, B_{\bar W})$ is connected.
\end{lem}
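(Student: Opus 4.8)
The plan is to establish the subadjunction formula first and then deduce the connectedness of the non-klt locus. For the main formula, I would reduce to the normal situation by working on the normalization. Let $\bar W \to W$ be the normalization of the slc stratum, and observe that $W$, being an slc center of $(X,\Delta)$, is by Definition \ref{def1.2} the image of an lc center of the normalization $(\bar X, \Delta_{\bar X})$. The strategy is to pick an lc center $\bar V$ of $(\bar X, \Delta_{\bar X})$ mapping onto $W$, take a minimal such center so that the restriction is klt on a dense open set, and then apply an adjunction/subadjunction construction. Concretely, I would take a dlt blow-up or a log resolution $g\colon Y \to \bar X$ extracting the divisor(s) of discrepancy $-1$ whose image is $V$, restrict $K_Y + \Delta_Y$ to the chosen lc center, and run the machinery of Lemma \ref{lem2.1}.

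The key technical step is to realize the restriction $(K_X+\Delta)\restr{\bar W}$ as the pushforward of a log-canonical-trivial fibration so that Lemma \ref{lem2.1} applies. I would set up a surjective morphism $f\colon V \to \bar W$ from a minimal lc center $V$ (after passing to its normalization, which is normal since minimal lc centers have normal normalization by Kawamata's result), where the induced pair $(V, \Delta_V)$ via adjunction is lc and is klt over the generic point of $\bar W$ — the latter holding precisely because $V$ is \emph{minimal} among the lc centers dominating $W$. The relation $K_V + \Delta_V \sim_\Q f^*((K_X+\Delta)\restr{\bar W})$ follows from adjunction and the defining numerical equivalence on the normalization. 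Then Lemma \ref{lem2.1} directly produces an effective $\Q$-divisor $B_{\bar W}$ with $K_{\bar W} + B_{\bar W} \sim_\Q (K_X+\Delta)\restr{\bar W}$ and with $\nklt(\bar W, B_{\bar W})$ contained in the image of $\nklt(V, \Delta_V)$. The final containment $\nklt(\bar W, B_{\bar W}) \subset \nu^{-1}(E)$ should come from identifying the non-klt locus downstairs with the images of the \emph{smaller} slc strata: an slc stratum strictly contained in $W$ corresponds to an lc center strictly contained in the chosen one, hence to the non-klt locus of $(V, \Delta_V)$ under adjunction.

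I expect the main obstacle to be the bookkeeping that matches the MMP-theoretic non-klt locus $\nklt(\bar W, B_{\bar W})$ with the geometrically defined preimage $\nu^{-1}(E)$ of the union of smaller slc strata. The delicate point is that slc strata are defined through the normalization $\bar X$ (Definition \ref{def1.2}), whereas the divisor $B_{\bar W}$ is manufactured on $\bar W$ by the abstract lc-trivial fibration argument; one must check that every slc stratum strictly contained in $W$ indeed shows up inside $\nklt(\bar W, B_{\bar W})$ and, conversely, that no spurious non-klt locus is introduced by the choices of $G_{Z'}$ in Lemma \ref{lem2.1}. This requires carefully tracking how the lc centers of $(\bar X, \Delta_{\bar X})$ contained in the preimage of $W$ descend under $\nu$, and invoking property (c) from the Preliminary section (intersections of slc strata are unions of slc strata) to see that $E$ is itself a union of slc strata.

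For the final \emph{moreover} clause, the plan is purely formal once the formula is in hand. When $(X,\Delta)$ is Fano, $-(K_X+\Delta)$ is ample, so $-(K_X+\Delta)\restr{\bar W}$ is ample on $\bar W$, making $(\bar W, B_{\bar W})$ a klt-away-from-$\nklt$ Fano log pair with $-(K_{\bar W}+B_{\bar W}) \sim_\Q -(K_X+\Delta)\restr{\bar W}$ ample. The connectedness of $\nklt(\bar W, B_{\bar W})$ then follows from the standard connectedness theorem for non-klt loci of Fano-type log pairs (the Kollár–Shokurov connectedness principle, in the form available for lc pairs), applied to the identity morphism $\bar W \to \bar W$: ampleness of the anti-log-canonical class forces the non-klt locus to be connected. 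Since $\bar W$ is normal and projective, this connectedness statement is exactly the classical one and needs no further input from the quasi-log formalism.
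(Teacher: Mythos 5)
Your overall skeleton matches the paper's proof: pass to the normalization $\bar X$, pick a minimal lc center dominating $W$, use minimality to get klt-ness over the generic point, feed the resulting fibration into Lemma~\ref{lem2.1}, and get connectedness from a vanishing/connectedness argument. However, the key technical step as you wrote it is circular. You define the pair $(V,\Delta_V)$ on a minimal lc center $V$ of $(\bar X,\Delta_{\bar X})$ ``via adjunction'' and assert that it is lc, klt over the generic point of $\bar W$, and satisfies $K_V+\Delta_V\sim_\Q f^*((K_X+\Delta)|_{\bar W})$. For an lc center of codimension at least two of an lc (not dlt) pair there is no such adjunction available a priori: producing a boundary on an lc center with exactly these properties \emph{is} Kawamata-type subadjunction, i.e., essentially the statement of Lemma~\ref{lem: subadj} itself. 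The paper avoids this circle by never performing adjunction downstairs: it takes a $\Q$-factorial dlt blow-up $f\colon (Y,\Delta_Y)\to(\bar X_i,\Delta_{\bar X_i})$, which is crepant, so $K_Y+\Delta_Y=f^*(K_{\bar X_i}+\Delta_{\bar X_i})$, and chooses a \emph{minimal lc stratum} $S$ of the dlt pair $(Y,\Delta_Y)$ dominating the component $V$ of $\mu^{-1}(W)$. On the dlt model $S$ is automatically normal, the pair $(S,\Delta_S)$ defined by honest (iterated divisorial) adjunction $K_S+\Delta_S=(K_Y+\Delta_Y)|_S$ is dlt, and it is klt over the generic point of $\bar W$ precisely by minimality of $S$. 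Since $S$ is normal, the map $S\to W$ factors through $\bar W$, and Lemma~\ref{lem2.1} is applied to this morphism $S\to\bar W$. Your first paragraph gestures at the dlt blow-up, but the restriction must be taken on a minimal stratum of the dlt pair upstairs, not on the center $V$ downstairs; as written, the middle of your argument assumes what is to be proved.

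Two smaller points. First, the inclusion $\nklt(\bar W,B_{\bar W})\subset\nu^{-1}(E)$ needs only one direction of the ``bookkeeping'' you worry about: Lemma~\ref{lem2.1} already gives $\nklt(\bar W,B_{\bar W})\subset \bar\mu\circ g(\nklt(S,\Delta_S))$, and every lc center of $(S,\Delta_S)$ is an lc center of $(Y,\Delta_Y)$ strictly contained in $S$, hence (by minimality of $S$) cannot dominate $V$, so its image in $X$ is an slc stratum strictly contained in $W$, i.e., lies in $E$; the converse direction (that every stratum in $E$ appears in the non-klt locus) is not needed for the lemma. Second, in the \emph{moreover} clause, the connectedness principle applied to the \emph{identity} morphism $\bar W\to\bar W$ is vacuous, since its fibers are points; you want the structure morphism $\bar W\to\operatorname{Spec}\C$, for which ampleness (hence nefness and bigness) of $-(K_{\bar W}+B_{\bar W})$ yields connectedness of $\nklt(\bar W,B_{\bar W})$. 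This is exactly what the paper proves directly via Nadel vanishing and the surjection $H^0(\bar W,\mo_{\bar W})\to H^0(\nklt(\bar W,B_{\bar W}),\mo_{\nklt(\bar W,B_{\bar W})})$, so your intended argument is equivalent once the slip is fixed.
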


\begin{proof}
Let $\mu\colon \bar X\rightarrow X$ 
be the normalization of $X$. Let 
$\bar X_i$ be an irreducible component 
of $\bar X$ that contains an irreducible 
component $V$ of $\mu^{-1}(W)$. Let 
$\Delta_{\bar X_i}$ be the effective $\Q$-divisor 
defined by $K_{\bar X_i}+\Delta_{\bar X_i}= 
(K_X+\Delta_X)\restr{{\bar X_i}}$. 
Then $(\bar X_i,\Delta_{\bar X_i})$ has log canonical singularities.

  Let $f\colon (Y, \Delta_Y)\rightarrow (\bar X_i, \Delta_{\bar X_i})$ 
 be a $\Q$-factorial dlt blow-up such that $K_Y+\Delta_Y
 =f^*(K_{\bar X_i}+\Delta_{\bar X_i})$ 
(see, for example, \cite[Theorem 4.4.21]{Fuj17a}). There 
 is an lc stratum $S$ of $(Y,\Delta_Y)$ 
 dominating $V$. We take $S$ to be a minimal 
 such lc stratum. Then $S$ is normal, and if $\Delta_S$ 
 is the effective $\Q$-divisor on $S$ defined 
 by adjunction $K_S+\Delta_S=(K_Y+\Delta_Y)\restr{S}$ 
 then $(S,\Delta_S)$ is again a dlt pair. 

Let $\sigma\colon\bar V\rightarrow V$ be 
the normalization. Then the 
morphism $\mu\restr{V}\circ \sigma\colon \bar V\rightarrow W$ 
factors through a 
morphism $\bar\mu\colon \bar V\rightarrow \bar W$. 
Since $S$ is normal, the morphism 
$S\rightarrow V$ factors through a 
morphism $g\colon S\rightarrow \bar V$. 
Thus we have a commutative 
diagram
$$
\xymatrix{
S\ar[d]_g\ar[rd]&\\
\bar V \ar[r]^{\sigma}\ar[d]_{\bar\mu} & V \ar[d]^{\mu\restr{V}}\\
\bar W \ar[r]^{\nu} & W
}
$$
where the morphisms in the lower square are all finite.

By the choice of $S$, the log 
pair $(S,\Delta_S)$ has klt singularities 
over the generic point of $\bar W$. By 
applying Lemma \ref{lem2.1} to $\bar\mu\circ g: S\to \bar W$, 
we can take an effective 
$\mathbb Q$-divisor $B_{\bar W}$ 
on $\bar W$ such that $K_{\bar W}+B_{\bar W}\sim 
_{\mathbb Q} (K_X+\Delta)|_{\bar W}$ and that the following inclusions 
$$
\nklt(\bar W, B_{\bar W})\subset \bar \mu\circ g(\nklt (S, \Delta_S))\subset 
\nu^{-1}(E)
$$ hold. 

If $-(K_X+\Delta)$ is ample, 
then so is $-(K_{\bar W}+B_{\bar W})$. 
Therefore, by the Nadel vanishing theorem (see, for example, 
\cite[Theorem 3.4.2]{Fuj17a}), we obtain 
$$H^i(\bar W, \mathcal J(\bar W, B_{\bar W}))=0$$ 
for any $i>0$, where $\mathcal J(\bar W, B_{\bar W})$ is the multiplier 
ideal sheaf of $(\bar W, B_{\bar W})$. 
It follows from the long 
exact sequence of cohomology 
that the natural restriction map 
$$H^0(\bar W, \mo_{\bar W})\rightarrow 
H^0(\nklt(\bar W, B_{\bar W}), 
\mo_{\nklt(\bar W, B_{\bar W})})$$ 
is surjective. Therefore, we see that  
$$H^0(\nklt(\bar W, B_{\bar W}), 
\mo_{\nklt(\bar W, B_{\bar W})})\cong\C$$ 
and $\nklt(\bar W, B_{\bar W})$ is connected.
\end{proof}

\begin{cor}\label{cor: min}
Let $(X,\Delta)$ be a Fano log pair with semi-log canonical singularities 
and $W_0$ its minimal slc stratum. Then there is 
an effective $\Q$-divisor $B_{W_0}$ on $W_0$ 
such that $(W_0, B_{W_0})$ is a Fano log 
pair with klt singularities and that 
\[
K_{W_0}+B_{W_0}\sim_\Q (K_X+\Delta)\restr{W_0}
\]
Hence $W_0$ is rationally connected {\em{(}}\cite{Zha06}{\em{)}} and 
$\pi_1(W_0)=1$ {\em{(}}\cite[Theorem 1.1]{Tak00}{\em{)}}.
\end{cor}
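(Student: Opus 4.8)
The plan is to read this off directly from the subadjunction formula of Lemma~\ref{lem: subadj}, applied to the minimal stratum itself, the point being that in this case the non-klt boundary locus collapses to nothing.

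First I would invoke property (e) recorded in the Preliminary section: the minimal slc stratum $W_0$ of a Fano log pair is normal. Consequently the normalization $\nu\colon\bar W_0\to W_0$ is an isomorphism, and I will silently identify $\bar W_0$ with $W_0$ from the outset; this is what allows the statement to produce a divisor on $W_0$ rather than merely on a normalization.

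Next I would apply Lemma~\ref{lem: subadj} with $W=W_0$. The crucial observation is the role of the auxiliary divisor $E$ in that lemma: it is the union of all slc strata \emph{strictly} contained in $W_0$. Since $W_0$ is by hypothesis the \emph{minimal} slc stratum, no slc stratum is strictly contained in it, and hence $E=\emptyset$. The lemma therefore furnishes an effective $\Q$-divisor $B_{W_0}$ on $W_0\cong\bar W_0$ with
\[
K_{W_0}+B_{W_0}\sim_\Q (K_X+\Delta)\restr{W_0}
\]
and with $\nklt(W_0,B_{W_0})\subset\nu^{-1}(E)=\emptyset$. An empty non-klt locus is precisely the assertion that $(W_0,B_{W_0})$ is klt.

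It then remains only to check the Fano condition and to cite the two external inputs. Since $-(K_X+\Delta)$ is ample and ampleness is preserved under restriction to a closed subvariety, the class $-(K_{W_0}+B_{W_0})\sim_\Q -(K_X+\Delta)\restr{W_0}$ is ample, so $(W_0,B_{W_0})$ is a klt Fano log pair. Rational connectedness of $W_0$ is then Zhang's theorem \cite{Zha06}, and the vanishing $\pi_1(W_0)=1$ is Takayama's \cite[Theorem~1.1]{Tak00}. I do not expect a genuine obstacle here: all the analytic substance is already contained in Lemma~\ref{lem: subadj}, and the only care needed is the bookkeeping that forces $E$ to be empty and makes $\nu$ an isomorphism — both of which rest squarely on the minimality and the normality (e) of $W_0$.
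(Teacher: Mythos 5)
Your proposal is correct and is exactly the argument the paper intends: the paper states Corollary~\ref{cor: min} without a separate proof precisely because it follows immediately from Lemma~\ref{lem: subadj} applied to $W=W_0$, where minimality forces $E=\emptyset$ (hence $\nklt(\bar W_0,B_{\bar W_0})=\emptyset$, i.e.\ klt) and normality of the minimal stratum (property (e)) makes the normalization an isomorphism. Your bookkeeping of these two points, plus the restriction of ampleness and the citations of \cite{Zha06} and \cite[Theorem 1.1]{Tak00}, matches the paper's intended reasoning in full.
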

\begin{cor}\label{cor: rcc}
Let $(X,\Delta)$ be a Fano log pair with 
slc singularities. Then any union of slc 
strata of $(X,\Delta)$ is rationally chain connected.
\end{cor}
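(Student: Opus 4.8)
The plan is to prove the rational chain connectedness of each slc stratum separately, using the subadjunction of Lemma~\ref{lem: subadj} to bring it within reach of the rational chain connectedness theorem for log canonical Fano pairs, and then to glue the pieces together using connectedness of unions of slc strata. Write the given union as $W=\bigcup_i W_i$, where $W_1,\dots,W_k$ are the slc strata of $(X,\Delta)$ that are maximal, under inclusion, among those contained in $W$.

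First I would show that each $W_i$ is rationally chain connected. Let $\nu_i\colon\bar W_i\to W_i$ be the normalization and $E_i$ the union of slc strata strictly contained in $W_i$. By Lemma~\ref{lem: subadj} there is an effective $\Q$-divisor $B_{\bar W_i}$ with $K_{\bar W_i}+B_{\bar W_i}\sim_\Q(K_X+\Delta)\restr{\bar W_i}$ and $\nklt(\bar W_i,B_{\bar W_i})\subset\nu_i^{-1}(E_i)$; since $(X,\Delta)$ is Fano, $-(K_{\bar W_i}+B_{\bar W_i})$ is ample. Granting that $(\bar W_i,B_{\bar W_i})$ is log canonical, the theorem of Hacon--McKernan (\cite{HM07}) gives that $\bar W_i$ is rationally chain connected. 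As $\nu_i$ is finite and surjective, the image of a chain of rational curves is again a chain of rational curves, so $W_i$ is rationally chain connected as well.

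It remains to glue. Since the intersection of two slc strata is a union of slc strata (property~(c)) and any union of slc strata is connected (property~(d)), the union $W$ is connected; hence the incidence graph on $\{W_1,\dots,W_k\}$, with an edge between $W_i$ and $W_j$ whenever $W_i\cap W_j\neq\emptyset$, is connected. Given two points of $W$ lying in strata $W_i$ and $W_j$, I would choose a path from $W_i$ to $W_j$ in this graph, pick a point in each successive intersection, and connect consecutive chosen points by rational chains inside the corresponding stratum; concatenating these produces a chain of rational curves in $W$ joining the two points. This shows $W$ is rationally chain connected.

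The point demanding the most care is the log canonicity of the subadjunction pair $(\bar W_i,B_{\bar W_i})$, which is what licenses the appeal to \cite{HM07}. The divisor $B_{\bar W_i}$ is produced, via Lemma~\ref{lem2.1} and the canonical bundle formula, as a discriminant part with coefficients at most one plus an effective representative of the nef moduli part; the pair is automatically klt outside $\nu_i^{-1}(E_i)$, and one must verify that the chosen representative of the moduli part does not create singularities worse than log canonical there. Once this is settled, the descent of rational chain connectedness through the finite map $\nu_i$ and the combinatorial gluing are routine.
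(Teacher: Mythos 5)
Your overall architecture --- prove each maximal stratum is rationally chain connected, then glue along the connected incidence graph using properties (c) and (d) --- matches the paper's reduction to single strata, and both the gluing step and the descent of rational chains through the finite normalization $\nu_i$ are sound. The genuine gap is exactly the step you flag at the end and defer: the log canonicity of $(\bar W_i, B_{\bar W_i})$. Lemma~\ref{lem: subadj} does not assert, and its proof cannot deliver, that this pair is log canonical; it only locates the non-klt locus, $\nklt(\bar W_i, B_{\bar W_i})\subset \nu_i^{-1}(E_i)$. In the underlying construction (Lemma~\ref{lem2.1}), the divisor is obtained by adding to the discriminant part $B_{Z'}$ (coefficients $\leq 1$, harmless) an effective representative $G_{Z'}$ of the moduli part, then pushing forward and descending via \cite[Lemma 1]{FG12}. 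The moduli part is only nef (after \cite{Amb05}, the pullback of a nef and big divisor), so any effective representative of it may be forced to contain components of large multiplicity meeting the coefficient-one locus of $B_{Z'}$; nothing in the construction prevents singularities worse than log canonical from appearing inside $\nu_i^{-1}(E_i)$. Producing an honest lc subadjunction pair on a \emph{non-minimal} slc stratum, with $K_{\bar W}+B_{\bar W}\sim_\Q (K_X+\Delta)\restr{\bar W}$ exactly and no ample perturbation, is a delicate problem tied to the semi-ampleness conjectures for moduli divisors; it is not available from the lemmas of this paper. So ``once this is settled'' is not a routine verification --- it is precisely the obstruction the paper's argument is engineered to avoid, and as written your appeal to \cite{HM07} for full rational chain connectedness of $\bar W_i$ does not go through.

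The paper circumvents this as follows: \cite[Corollary 1.4]{BP11} applies to an \emph{arbitrary} log pair whose anti-log-canonical divisor is ample --- no lc hypothesis --- and yields that $\bar W$ is rationally connected \emph{modulo} $\nklt(\bar W, B_{\bar W})$, hence modulo $\nu^{-1}(E)$. Pushing down, $W$ is rationally connected modulo $E$; since $E$ is a union of slc strata of strictly smaller dimension, one concludes by induction on the dimension of the strata, the base case being the minimal stratum $W_0$, which is a klt Fano pair by Corollary~\ref{cor: min} and hence rationally connected. If you replace ``$(\bar W_i, B_{\bar W_i})$ is lc Fano, hence RCC by \cite{HM07}'' with this scheme of ``rational connectedness modulo the non-klt locus, plus induction on dimension'' (results of \cite{HM07} in that ``modulo'' form, which likewise require no lc hypothesis, would also suffice in place of \cite{BP11}), your argument closes; your gluing step can then be kept as is, or replaced by the paper's terser observation that connectedness of unions of strata reduces everything to single strata.
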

\begin{proof}
Since any union of slc strata of $(X,\Delta)$ is 
connected, it suffices to prove that 
any single slc stratum are rationally chain connected. 

Let $W$ be an slc stratum of $(X,\Delta)$ and $E$ 
the union of all slc strata that are strictly 
contained in $W$. Let $\nu\colon \bar W\rightarrow W$ 
be the normalization. By Lemma~\ref{lem: subadj} there is an 
effective $\Q$-divisor $B_{\bar W}$ on $\bar W$ such 
that $K_{\bar W}+B_{\bar W}\sim_\Q (K_X+\Delta)\restr{\bar W}$, 
which is anti-ample, and $\nklt(\bar W, 
B_{\bar W})\subset \nu^{-1}(E)$. 
By \cite[Corollary 1.4]{BP11}, $\bar W$ is rationally 
connected modulo $\nklt (\bar W, B_{\bar W})$, 
that is, for any general point $w$ of $\bar W$ there 
exists a rational curve $C_w$ passing through 
$w$ and intersecting $\nklt(\bar W, B_{\bar W}$).  
In particular, $\bar W$ is rationally connected modulo $\nu^{-1}(E)$. 
It follows that $W$ is 
rationally connected 
modulo $E$ which is the union of 
lower dimensional slc strata. Thus 
we can run induction on dimension of the slc strata, 
noting that the minimal slc stratum of $(X,\Delta)$ 
is rationally connected by Corollary~\ref{cor: min}.
\end{proof}

We prepare an important lemma for the proof of Theorem 
\ref{thm2.7}. 

\begin{lem}\label{lem: conn}
Let $W$ be a non-minimal slc stratum 
of a Fano log pair $(X,\Delta)$ with 
semi-log canonical singularities, and 
let $E$ be the union of all slc strata that 
are strictly contained in $W$.
Let $\nu\colon \bar W\rightarrow W$ 
be the normalization. Then $\nu^{-1}(E)$ is connected.
\end{lem}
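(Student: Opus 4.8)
The plan is to derive the connectedness of $\nu^{-1}(E)$ from two connectedness statements already at our disposal—one on $\bar W$ and one on $W$—combined by an elementary argument that uses only that $\nu$ is finite and birational. Two inputs are immediate. First, since $W$ is non-minimal, $E$ is a nonempty union of slc strata of the Fano pair $(X,\Delta)$, so $E$ is connected by property~(d) of Section~1. Second, Lemma~\ref{lem: subadj} supplies an effective $\Q$-divisor $B_{\bar W}$ with $K_{\bar W}+B_{\bar W}\sim_\Q (K_X+\Delta)\restr{\bar W}$ anti-ample, with $\nklt(\bar W,B_{\bar W})\subseteq\nu^{-1}(E)$, and—crucially in the Fano case—with $F:=\nklt(\bar W,B_{\bar W})$ connected.

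The one additional geometric fact I would establish is that the conductor of the normalization lies in $F$. Write $\op{NN}(W)$ for the non-normal locus of $W$ and let $D=\nu^{-1}(\op{NN}(W))$ be the conductor. By property~(b) the minimal slc stratum through any point of $\op{NN}(W)$ is strictly smaller than $W$, so $\op{NN}(W)\subseteq E$ and $D\subseteq\nu^{-1}(E)$. I claim moreover that $D\subseteq F$: the non-normal locus of a semi-log canonical pair is nodal in codimension one, so every component of the conductor enters $B_{\bar W}$ with coefficient one, and therefore $D\subseteq\nklt(\bar W,B_{\bar W})=F$.

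Granting this, the proof is purely formal. Suppose $\nu^{-1}(E)=A\sqcup B$ with $A,B$ disjoint, closed and nonempty. Since $F$ is connected we may assume $F\subseteq A$, and then $D\subseteq A$. As $\nu$ is finite it is closed, so $\nu(A)$ and $\nu(B)$ are closed subsets of $E$ with $\nu(A)\cup\nu(B)=E$; since $E$ is connected they meet in some point $p$. Then $\nu^{-1}(p)$ meets both $A$ and $B$, so it has at least two points, and hence $p\in\op{NN}(W)$, because $\nu$ is an isomorphism over the normal locus of $W$. Consequently $\nu^{-1}(p)\subseteq D\subseteq A$, contradicting $\nu^{-1}(p)\cap B\ne\varnothing$. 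Therefore $\nu^{-1}(E)$ is connected.

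The main obstacle is the inclusion $D\subseteq F$ of the second paragraph; this is the only place where the semi-log canonical hypothesis is genuinely used. Proving it amounts to checking that the coefficient-one contribution of the non-normal locus survives the construction of $B_{\bar W}$ in Lemma~\ref{lem: subadj}: one must follow the double locus through the $\Q$-factorial dlt blow-up and the minimal lc stratum $S$, and verify that the discriminant part of the canonical bundle formula invoked in Lemma~\ref{lem2.1} acquires coefficient at least one along every component of the conductor. Once $D\subseteq F$ is secured, the connectedness of $F$ and of $E$ combine, through the birationality of $\nu$, to finish the proof.
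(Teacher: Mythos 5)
Your topological reduction in the third paragraph is sound: granting that $E$ is connected, that $F=\nklt(\bar W,B_{\bar W})$ is connected, and that $D=\nu^{-1}(\mathrm{NN}(W))\subseteq F$, the disjoint-decomposition argument does yield connectedness of $\nu^{-1}(E)$. The problem is that the third input, $D\subseteq F$, is not a deferred ``verification'' but the entire content of the lemma, and your sketch of it cannot be completed along the lines you indicate. Lemma~\ref{lem: subadj} only bounds the nklt locus from \emph{above}: in Lemma~\ref{lem2.1} the divisor is assembled from a general member $G_{Z'}\sim_{\Q}M_{Z'}$ and the construction of \cite{FG12}, both of which are designed to make $\nklt(\bar W,B_{\bar W})$ \emph{small}, and nothing forces it to contain any prescribed subset of $\nu^{-1}(E)$. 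More seriously, $B_{\bar W}$ is built from data on a \emph{single} component $\bar X_i$ of the normalization of $X$ and a single minimal lc stratum $S$ over it, whereas the non-normality of the stratum $W$ is produced by the gluing data of the whole of $(\bar X,\Delta_{\bar X})$, which $(S,\Delta_S)$ does not see; there is no reason the discriminant of $S\to\bar W$ acquires coefficient one along every conductor component. A test case makes the circularity visible: if $W$ were a one-dimensional stratum with a single node $p$ and $E=\{p\}$, then $\nu^{-1}(E)$ is two points, $F$ is a connected subset of it, and your claim $D\subseteq F$ asserts both points lie in $F$ --- which is impossible unless such a configuration simply cannot occur in a Fano slc pair; but ruling that configuration out \emph{is} the lemma. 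So proving $D\subseteq F$ is at least as hard as the statement you are trying to prove. There is also a secondary flaw in the sketch: the assertion that the relevant scheme is ``nodal in codimension one'' applies to the demi-normal scheme $X$, not to the stratum $W$; slc strata are seminormal but need not be demi-normal, and a priori $\mathrm{NN}(W)$ may even have components of codimension at least two in $W$, which no purely divisorial coefficient computation can reach.

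For contrast, the paper never tries to manufacture a $\Q$-divisor on $\bar W$ whose nklt locus is all of $\nu^{-1}(E)$; this is exactly what a single divisor cannot remember. Instead it uses adjunction for quasi-log structures (Theorem~\ref{thm1.5} together with \cite[Theorem 6.3.5 (i)]{Fuj17a}) to view $[W,\omega]$, $\omega=(K_X+\Delta)\restr{W}$, as a qlc pair with $\nqklt(W,\omega)=E$, and then invokes the normalization theorem \cite[Theorem 1.1]{FL17}, which says that $[\bar W,\nu^*\omega]$ is again qlc with $\nqklt(\bar W,\nu^*\omega)=\nu^{-1}(E)$ \emph{exactly}; the vanishing theorem for qlc pairs then gives $H^1(\bar W,\mathcal I_{\nu^{-1}(E)})=0$, hence $H^0$ of the structure sheaf of $\nu^{-1}(E)$ equals $\C$ and $\nu^{-1}(E)$ is connected. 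The deep ingredient your route is missing is precisely this equality of non-(q)klt loci after normalization, which lives in the qlc category because the quasi-log structure, unlike any single boundary divisor $B_{\bar W}$, retains the gluing data. To close your gap you would in effect have to reprove \cite[Theorem 1.1]{FL17} (or develop Koll\'ar's gluing theory for slc strata), which defeats the purpose of the elementary detour.
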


\begin{proof}
By \cite[Theorem 1.2]{Fuj14a} (see also Theorem 
\ref{thm1.5}) and adjunction (see, 
for example, \cite[Theorem 6.3.5 (i)]{Fuj17a}), 
$[W, \omega]$ is a qlc pair, where $\omega=(K_X+\Delta)|_W$, 
such that $\nqklt(W, \omega)=E$. 
By \cite[Theorem 1.1]{FL17}, we see that 
$[\bar W, \nu^*\omega]$ is also qlc with 
$\nqklt(\bar W, \nu^*\omega)=\nu^{-1}(E)$. 
Since $-\nu^*\omega$ is ample, 
$H^i(\bar W, \mathcal I_{\nqklt(\bar W, \nu^*\omega)})=0$ for 
every $i>0$ by the vanishing theorem 
(see, for example, \cite[Theorem 6.3.5 (ii)]{Fuj17a}). 
Note that $\mathcal I_{\nqklt (\bar W, \nu^*\omega)}$ is the defining 
ideal sheaf of $\nqklt (\bar W, \nu^*\omega)$ on $\bar W$. 
It follows from the long 
exact sequence of cohomology 
that the natural restriction map $$H^0(\bar W, \mo_{\bar W})\rightarrow 
H^0(\nqklt(\bar W, \nu^*\omega), 
\mo_{\nqklt(\bar W, \nu^*\omega)})$$ 
is surjective. Therefore, we see that 
$
H^0(\nqklt(\bar W, \nu^*\omega), 
\mo_{\nqklt(\bar W, \nu^*\omega)})
\cong\C$ 
and $\nu^{-1}(E)=\nqklt(\bar W, \nu^*\omega)$ is connected.
\end{proof}

\begin{thm}\label{thm2.7}
Let $(X,\Delta)$ be a Fano log pair with slc singularities 
and $W$ the union of some slc strata of $(X,\Delta)$.  Then $\pi_1(W)=1$.
\end{thm}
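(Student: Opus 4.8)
The plan is to prove $\pi_1(W)=1$ by induction on $\dim W$, reducing at each step to the normalization of a single slc stratum, where the Fano-type subadjunction of Lemma \ref{lem: subadj} together with \cite[Corollary 1.4]{HM07} controls the fundamental group, and then assembling general unions by the Seifert--van Kampen theorem. Throughout I would use that a complex variety, in its analytic topology, is a CW complex in which closed subvarieties are subcomplexes, so every inclusion below is a cofibration and van Kampen applies to the relevant pushouts; all spaces in sight are path-connected by property (d), and base points are suppressed as in our conventions.

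First I would treat a single slc stratum $W$ of dimension $d$, assuming the theorem known for every union of slc strata of dimension $<d$. If $W$ is the minimal stratum $W_0$, then $\pi_1(W)=1$ by Corollary \ref{cor: min}. Otherwise the union $E$ of slc strata strictly contained in $W$ is nonempty, each of dimension $<d$, so $\pi_1(E)=1$ by the inductive hypothesis. Let $\nu\colon\bar W\to W$ be the normalization. By property (b), $W$ is normal away from $E$, so $\nu$ is an isomorphism over $W\setminus E$ and $W$ is the topological pushout of $E\xleftarrow{\nu}\nu^{-1}(E)\hookrightarrow\bar W$; hence van Kampen gives $\pi_1(W)\cong\pi_1(\bar W)*_{\pi_1(\nu^{-1}(E))}\pi_1(E)$, where $\nu^{-1}(E)$ is connected by Lemma \ref{lem: conn}. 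Now Lemma \ref{lem: subadj} produces $B_{\bar W}$ with $K_{\bar W}+B_{\bar W}$ anti-ample and $\nklt(\bar W,B_{\bar W})\subset\nu^{-1}(E)$, so (exactly as in the proof of Corollary \ref{cor: rcc}) $\bar W$ is rationally connected modulo $\nu^{-1}(E)$, and by \cite[Corollary 1.4]{HM07} the map $\pi_1(\nu^{-1}(E))\to\pi_1(\bar W)$ is surjective. Feeding $\pi_1(E)=1$ together with this surjectivity into the amalgamated product collapses it to the trivial group, whence $\pi_1(W)=1$.

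Next I would handle an arbitrary union $W=W_1\cup\cdots\cup W_m$ of slc strata of dimension $\le d$ by a secondary induction on $m$, the case $m=1$ being the previous paragraph. Writing $W=W'\cup W_m$ with $W'=W_1\cup\cdots\cup W_{m-1}$, property (c) shows that $T:=W'\cap W_m$ is again a union of slc strata, and property (d) shows it is connected and, since $W$ is connected, nonempty. Van Kampen then gives $\pi_1(W)\cong\pi_1(W')*_{\pi_1(T)}\pi_1(W_m)$, and since $\pi_1(W')=1$ by the inner induction and $\pi_1(W_m)=1$ by the single-stratum case, the pushout of two trivial groups is trivial, so $\pi_1(W)=1$. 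This closes both inductions and proves the theorem.

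The main obstacle I anticipate lies entirely in the first paragraph's passage to the normalization: one must justify rigorously that $W$ is the topological pushout $\bar W\cup_{\nu^{-1}(E)}E$ and that Seifert--van Kampen applies in this non-normal, possibly singular setting (path-connectedness, choice of base points, and the cofibration of the subcomplex $\nu^{-1}(E)\hookrightarrow\bar W$), and then to invoke \cite[Corollary 1.4]{HM07} correctly so as to convert rational connectedness modulo $\nu^{-1}(E)$ into the surjectivity of $\pi_1(\nu^{-1}(E))\to\pi_1(\bar W)$. Once this interface between the birational geometry on $\bar W$ and the gluing back down to $W$ is in place, the remaining group theory and the two inductions are purely formal.
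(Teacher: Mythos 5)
Your proposal is correct, and its engine is exactly the paper's: normalize a stratum, use Lemma~\ref{lem: subadj} to produce an anti-ample $K_{\bar W}+B_{\bar W}$ with $\nklt(\bar W,B_{\bar W})\subset\nu^{-1}(E)$, invoke \cite[Corollary 1.4]{HM07} (plus the containment of the $\nklt$ locus) for surjectivity of $\pi_1(\nu^{-1}(E))\to\pi_1(\bar W)$, with $\nu^{-1}(E)$ connected by Lemma~\ref{lem: conn}, and collapse the resulting amalgamated product, the induction bottoming out at the klt Fano minimal stratum (Corollary~\ref{cor: min}). Where you genuinely diverge is the combinatorial organization. The paper runs one descending induction along the filtration $W=W^{(0)}\supset W^{(1)}\supset\cdots\supset W^{(k)}=W_0$ and, at each level, covers $Z=W^{(i-1)}$ by \emph{open} sets $V_j=Z_j\cup U$, where $U$ is a regular neighborhood of $E=W^{(i)}$; all pairwise intersections equal $U$, which deformation retracts to the (inductively) simply connected $E$, so van Kampen yields a free product $*_j\pi_1(V_j)\cong\pi_1(Z)$, and each $\pi_1(Z_j\cup E)$ is then killed via the double mapping cylinder of \cite[Proposition 3.1]{FPR15}. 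You instead induct on dimension for single strata (applying the pushout argument to $W$ itself, glued from $\bar W$ and its own internal $E$) and assemble arbitrary unions by a secondary induction on the number of strata, gluing one stratum $W_m$ to the union $W'$ of the others along the \emph{closed} intersection $T=W'\cap W_m$; this step leans on properties (c) and (d) (so that $T$ is a nonempty connected union of strata), which the paper's union step never needs, and it exploits that $1*_{\pi_1(T)}1=1$ regardless of $\pi_1(T)$. Your route is more modular (each stratum is treated once, independently of the ambient union), while the paper's open-cover trick disposes of all components of a filtration level in a single van Kampen application and concentrates the point-set topology (regular neighborhoods, cf.~\cite[Chapter 1, Theorem 8.8]{BHPV04}) in one place. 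Two small caveats: both of your van Kampen applications (closed gluing, and pushout along $\nu$) need the triangulability/cofibration justification you flag, which is precisely what the paper outsources to \cite{FPR15}; and your one-line appeal to property (b) for ``$W$ is normal away from $E$'' tacitly uses (a) and (c) as well, to see that $W$ is the \emph{minimal} stratum through each point of $W\setminus E$.
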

\begin{proof}
Note that $W$ is connected and contains 
the minimal slc stratum $W_0$ of $(X,\Delta)$.
Let $W^{(0)}:=W$. Suppose that $W^{(i)}$ is 
defined. We define $W^{(i+1)}$ to be the 
union of slc strata that are strictly contained 
in an irreducible component of $W^{(i)}$. 
Thus we obtain a filtration of reduced subschemes of $W$:
\[
W=W^{(0)}\supset W^{(1)}\supset\cdots\supset W^{(k)}=W_0
\]
 We want to show by inverse induction 
 on $i$ that $\pi_1(W^{(i)})=1$ for 
 any $i \geq 0$. In particular, it will follow that $\pi_1(W)=\pi_1(W^{(0)})=1$.
  
 By Corollary~\ref{cor: min} we know that $\pi_1(W^{(k)})=\pi_1(W_0)=1$.

 Now assuming $\pi_1(W^{(i)})=1$, we 
 need to show that 
 $\pi_1(W^{(i-1)})=1$. For 
 simplicity of notation, let us 
 denote $Z:=W^{(i-1)}$and 
 $E:=W^{(i)}$. We construct a 
 covering family of open 
 subsets of $Z$ in the 
 Euclidean topology: Let $Z_j$ be 
 the irreducible components of $Z$ 
 and $U_j$ an open neighborhood 
 of $E_j:=Z_j \cap E$ in $Z_j $ such that $E_j$ is 
 a deformation retract of $U_j$ 
 (cf.~\cite[Chapter~1, 
Theorem~8.8]{BHPV04}). Let $U=\cup_j U_j$ 
and $V_j:=Z_j \cup U$. Then $\{V_j\}_j$ is an 
open covering of $Z$ such that $V_{j_1}\cap V_{j_2}=U$ 
for any $j_1\neq j_2$, which is connected. 

Note that the $U_j$'s are closed subsets of $U$ 
and $U_{j_1}\cap U_{j_2}=E_{j_1}\cap E_{j_2}$ 
for $j_1\neq j_2$, so the deformation 
retractions $U_{j}\times I \rightarrow U_{j}$ 
from $U_{j}$ onto $E_j$ coincide on $(U_{j_1}\cap U_{j_2})\times I$ 
for any $j_1\neq j_2$, and thus glue to a continuous 
map $U\times I\rightarrow U$ which is a deformation retraction of $U$ 
onto $E$. Here $I$ denotes the unit interval $[0,1]$. 
Similarly, there is a deformation 
retraction from $V_j$ onto $Z_j\cup E$. It 
follows that $\pi_1(U)=\pi_1(E)=1$ and $\pi_1(V_j)=\pi_1(Z_j\cup E)$. Applying 
the van Kampen theorem, we obtain an isomorphism 
\begin{equation}\label{eq: fund 1}
*_j \pi_1(V_j)\xrightarrow{\sim}\pi_1(Z).
\end{equation}
where $*_j \pi_1(V_j)$ denotes the 
free product of the $\pi_1(V_j)$'s. 
Therefore, it suffices to show 
that $\pi_1(V_j)=\pi_1(Z_j\cup E)$ is trivial for each $j$.

Let $\nu\colon \bar Z_j \rightarrow Z_j $ be the 
normalization. Then by Lemma~\ref{lem: subadj} we 
can find an effective $\mathbb Q$-divisor $B_{\bar Z_j }$ on 
$\bar Z_j $ such that 
$K_{\bar Z_j }+B_{\bar Z_j }\sim_\Q (K_X+\Delta)\restr{\bar Z_j }$, 
which is anti-ample. We note that 
$\nu^{-1}(E_j)$ is connected  by Lemma \ref{lem: conn}. 
By \cite[Proposition~3.1]{FPR15}, there is a homotopy equivalence 
between the double mapping cylinder 
$E\,\cup_\nu \nu^{-1}(E_j)\times I\, \cup_\iota\bar Z_j$ 
and $Z_j\cup E$ where $\iota\colon \nu^{-1}(E_j)\hookrightarrow \bar Z_j$ 
is the inclusion map, and it follows that (cf.~\cite[Corollary 3.2, (ii)]{FPR15})
\begin{equation}\label{eq: FPR}
\pi_1(Z_j\cup E)\cong \pi_1(E\,\cup_\nu \nu^{-1}(E_j)
\times I\, \cup_\iota\bar Z_j)\cong \pi_1(E) *_{\pi_1(\nu^{-1}(E_j))}\pi_1(\bar Z_j).
\end{equation}
By \cite[Corollary 1.4]{HM07} the induced homomorphism 
$\pi_1(\nklt(\bar Z_j, B_{\bar Z_j}))\rightarrow \pi_1(\bar Z_j)$
is surjective. Therefore, $\pi_1(\nu^{-1}(E_j))\rightarrow \pi_1(\bar Z_j)$ 
is surjective since 
$\nklt(\bar Z_j, B_{\bar Z_j})\subset \nu^{-1}(E_j)\subset 
\bar Z_j$, 
and so is the induced 
homomorphism 
$\pi_1(E)\rightarrow \pi_1(Z_j\cup E)$ 
by \eqref{eq: FPR}.  Since $\pi_1(E)=1$ by the induction hypothesis, 
we have the desired triviality of $\pi_1(Z_j\cup E)$.
\end{proof}

\end{document}